\theoremstyle{plain}
\newtheorem{thm}{\protect\theoremname}
\theoremstyle{plain}
\newtheorem{lem}[thm]{\protect\lemmaname}
\theoremstyle{remark}
\newtheorem{rem}[thm]{\protect\remarkname}
\theoremstyle{definition}
\newtheorem{example}[thm]{\protect\examplename}
\theoremstyle{plain}
\newtheorem{cor}[thm]{\protect\corollaryname}
\DeclareMathOperator{\Spec}{\mathrm{Spec}}
\newcommand{\Z}{\mathbb{Z}}
\providecommand{\corollaryname}{Corollary}
\providecommand{\examplename}{Example}
\providecommand{\lemmaname}{Lemma}
\providecommand{\remarkname}{Remark}
\providecommand{\theoremname}{Theorem}
\begin{document}
\address[Kentaro Mitsui]{Department of Mathematical Sciences, University of the Ryukyus, 1 Senbaru, Nishihara-cho, Okinawa 903-0213, Japan}
\email{mitsui@math.u-ryukyu.ac.jp}
\address[Nobuo Sato]{Department of Mathematics, National Taiwan University, No.\,1, Sec.\,4, Roosevelt Rd., Taipei 10617, Taiwan (R.O.C.)}
\email{nbsato@ntu.edu.tw}
\subjclass[2020]{12F15, 14G17, 14J17}
\title{A criterion for $p$-closedness of derivations in dimension two}
\author{Kentaro Mitsui and Nobuo Sato}
\keywords{Purely inseparable extensions, derivations, singularities of surfaces
in positive characteristic}
\begin{abstract}
Jacobson developed a counterpart of Galois theory for purely inseparable
field extensions in positive characteristic. In his theory, a certain
type of derivations replace the role of the generators of Galois groups.
This article provides a convenient criterion for determining such
derivations in dimension two. We also present examples demonstrating
the efficiency of our criterion.
\end{abstract}

\maketitle
Consider a field $K$ of characteristic $p>0$. Let $K^{p}$ denote
the subfield of $p$-th powers in $K$, $\mathcal{E}_{K}$ the set
of finite index subfields of $K$ that contain $K^{p}$, and $\mathcal{D}_{K}$
the set of finite dimensional restricted Lie algebras over $K$. Jacobson
showed that the map $\mathcal{D}_{K}\to\mathcal{E}_{K}$ sending $A$
to the subfield of $A$-constants in $K$ is a bijection \cite[p.\,189]{key-3}.
In this sense, $\mathcal{D}_{K}$ functions as a counterpart of the
Galois groups of Galois extensions for $\mathcal{E}_{K}$, and the
subset of $\mathcal{E}_{K}$ obtained by restricting the indices to
at most $p$ corresponds to the set of all restricted Lie algebras
generated by one element as $K$-vector spaces. Here, the generators
$\partial$ are termed $\textit{p-closed derivations}$, characterized
by the existence of an $a\in K$ such that $\partial^{p}=a\partial$.
Specifically, nonzero $p$-closed derivations are akin to the generators
of the Galois groups for purely inseparable subfields of $K$ of index
$p$, and they play a fundamental role in studying purely inseparable
extensions.

On the other hand, $p$-closed derivations have been employed in the
theory of algebraic surfaces in positive characteristic to construct
purely inseparable quotients of degree $p$. For instance, the nonexistence
of nonzero regular vector fields on $K3$ surfaces has been proved
as an application \cite[\S6, Theorem 7]{key-7}. They are also utilized
in the explicit construction of elliptic surfaces (e.g., \cite{key-4}).
Let $K$ be the function field of an affine coordinate ring $R$ of
a nonsingular surface. The subring $R^{\partial}$ of $\partial$-constants
in $R$ plays a similar role as the invariant ring under the action
of a finite group, and the embedding $R^{\partial}\to R$ induces
a purely inseparable quotient morphism $\text{Spec}\,R\to\text{Spec}\,R^{\partial}$
of degree at most $p$. The quotient $\text{Spec}\,R^{\partial}$
generally exhibits singularities and has been applied particularly
to the studies on non-taut rational double points unique to positive
characteristic (e.g., \cite{key-6}, \cite{key-5}).

This short article presents a concise and effective method for determining
the $p$-closedness of a derivation $\partial=f\partial_{x}+g\partial_{y}$
in the two-dimensional case. This method is valid for an arbitrary
$\partial$, but it works most efficiently when $\partial_{x}(f)+\partial_{y}(g)=0$.
Such derivations appear in the studies on the aforementioned singularities.
Furthermore, the method indicates the extent to which $\partial$
is $p$-closed. More precisely, we provide a clean and computationally
powerful formula for $\partial(x)\partial^{p}(y)-\partial(y)\partial^{p}(x)$,
whose vanishing is equivalent to $\partial$ being $p$-closed. In
the general case, it is necessary to first find an $a\in K^{\times}$
such that $\partial_{x}(af)+\partial_{y}(ag)=0$, and a nontrivial
solution to a system of $p^{2}-1$ linear homogeneous equations in
$p^{2}$ variables provides such an $a$.

The proof of the main theorem utilizes the Cartier operator. In positive
characteristic algebraic geometry, the Cartier isomorphism is considered
pivotal (e.g., \cite{key-2}), and while the Cartier operator has
a well-known definition on closed forms, our proof revisits its definition
on all $1$-forms \cite{key-1}, applying the fact that the two definitions
agree.
\begin{lem}
\label{lem:p-closed}Suppose that $L$ is a subfield of a field $K$
of characteristic $p>0$ and that $K$ has a $p$-basis $x_{1},\ldots,x_{n}$
over $L$. Then, for given $n$-elements $f_{1},\ldots,f_{n}$ of
$K$, there exists $a\in K^{\times}$ such that $\sum_{i=1}^{n}\partial_{x_{i}}(af_{i})=0$.
\end{lem}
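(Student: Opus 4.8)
The plan is to recast the statement as a non-surjectivity claim for an $L$-linear endomorphism of $K$. Since $x_{1},\dots,x_{n}$ is a $p$-basis of $K$ over $L$, the monomials $x^{\underline{e}}:=x_{1}^{e_{1}}\cdots x_{n}^{e_{n}}$ with $\underline{e}=(e_{1},\dots,e_{n})$ and $0\le e_{i}\le p-1$ form an $L$-basis of $K$; in particular $\dim_{L}K=p^{n}<\infty$, while each partial derivative $\partial_{x_{i}}$ vanishes on $L$ and satisfies $\partial_{x_{i}}(x^{\underline{e}})=e_{i}\,x^{\underline{e}-\mathbf{1}_{i}}$ (read as $0$ when $e_{i}=0$), where $\mathbf{1}_{i}$ is the multi-index with $1$ in position $i$ and $0$ elsewhere. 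Define $T\colon K\to K$ by $T(a)=\sum_{i=1}^{n}\partial_{x_{i}}(af_{i})$; expanding by the Leibniz rule and using $\partial_{x_{i}}|_{L}=0$ shows that $T$ is $L$-linear. A nonzero element of $\Ker T$ is automatically a unit of $K$, so the lemma is equivalent to $\Ker T\neq0$, and since $\dim_{L}K<\infty$ it suffices to prove that $T$ is \emph{not} surjective.

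Because $af_{i}\in K$ for every $i$, we have $\Img T\subseteq\sum_{i=1}^{n}\partial_{x_{i}}(K)$, so it is enough to show this sum is a proper subspace of $K$, and the explicit $p$-basis settles this. For a fixed $i$, every monomial occurring in $\partial_{x_{i}}(x^{\underline{e}})$ has $i$-th exponent at most $p-2$; conversely, whenever $e_{i}\le p-2$ the product $x_{i}x^{\underline{e}}$ is again one of our basis monomials and $\partial_{x_{i}}(x_{i}x^{\underline{e}})=(e_{i}+1)\,x^{\underline{e}}$ with $e_{i}+1$ invertible modulo $p$. Hence $\partial_{x_{i}}(K)=\bigoplus_{\underline{e}\,:\,e_{i}\neq p-1}L\,x^{\underline{e}}$, and summing over $i$ yields $\sum_{i}\partial_{x_{i}}(K)=\bigoplus_{\underline{e}\neq(p-1,\dots,p-1)}L\,x^{\underline{e}}$, a subspace of $L$-codimension $1$ omitting $x_{1}^{p-1}\cdots x_{n}^{p-1}$. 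Thus $\Img T\subsetneq K$, so $T$ is not injective, and any nonzero $a\in\Ker T$ satisfies $\sum_{i=1}^{n}\partial_{x_{i}}(af_{i})=0$, as required.

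The whole argument rests on the single computation $\sum_{i}\partial_{x_{i}}(K)\subsetneq K$, i.e.\ that the images of the $\partial_{x_{i}}$ jointly omit the top monomial $x_{1}^{p-1}\cdots x_{n}^{p-1}$; this is the only point needing an idea, and it is elementary. (Equivalently, with $\omega=dx_{1}\wedge\cdots\wedge dx_{n}$ and $\eta_{a}=\sum_{i}(-1)^{i-1}af_{i}\,dx_{1}\wedge\cdots\wedge\widehat{dx_{i}}\wedge\cdots\wedge dx_{n}$ one has $d\eta_{a}=T(a)\,\omega$, and the claim is that $d\Omega^{n-1}_{K/L}$ is a proper subspace of $\Omega^{n}_{K/L}$ — a degenerate instance of the Cartier isomorphism — but the bare-hands version above is shorter and self-contained.) With the $p$-basis written out, no step of the plan presents a genuine obstacle.
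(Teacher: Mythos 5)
Your argument is, at bottom, the same as the paper's: both reduce the lemma to the observation that the coefficient of the top monomial $x_{1}^{p-1}\cdots x_{n}^{p-1}$ in $\sum_{i}\partial_{x_{i}}(af_{i})$ vanishes identically in $a$, so that the linear condition imposed on $a$ has rank at most $p^{n}-1$ and therefore a nontrivial solution. The paper phrases this as a homogeneous linear system in the $p^{n}$ coordinates of $a$; you phrase it as non-surjectivity (hence non-injectivity) of an endomorphism $T$ of a $p^{n}$-dimensional space. These are the same dimension count, and your identification of the missing monomial is exactly the paper's remark that $g_{I}=0$ for $I=(p-1,\dots,p-1)$.

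There is, however, one genuine error in your write-up: the monomials $x^{\underline{e}}$ with $0\le e_{i}\le p-1$ form a basis of $K$ over the compositum $K^{p}L$, not over $L$, and $\dim_{L}K$ need not be finite. For instance, $x,y$ is a $p$-basis of $K=\mathbb{F}_{p}(x,y)$ over $L=\mathbb{F}_{p}$, yet $\dim_{L}K=\infty$; the finite-dimensional statement furnished by the $p$-basis is $\dim_{K^{p}L}K=p^{n}$. As written, the step ``since $\dim_{L}K<\infty$ it suffices to prove that $T$ is not surjective'' fails, and the decompositions $\partial_{x_{i}}(K)=\bigoplus_{\underline{e}:\,e_{i}\ne p-1}L\,x^{\underline{e}}$ are false with $L$-coefficients. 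The repair is immediate and leaves your structure intact: $T$ is in fact $K^{p}L$-linear (each $\partial_{x_{i}}$ annihilates both $K^{p}$ and $L$), the direct sums hold with coefficients in $K^{p}L$, and the rank--nullity argument then runs over the $p^{n}$-dimensional $K^{p}L$-vector space $K$ exactly as you intend --- which is precisely the coefficient field the paper uses.
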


\begin{rem}
The proof below is constructive and gives an algorithm for finding
the multiplier $a$ of Lemma \ref{lem:p-closed}.
\end{rem}

\begin{proof}
Let $g\coloneqq\sum_{i=1}^{n}\partial_{x_{i}}\left(af_{i}\right)$.
Notice that $b\in K$ has a unique expression as $b=\sum_{I\in[0,p-1]^{n}}b_{I}x^{I}$
($b_{I}\in K^{p}L$), where $[0,p-1]\coloneqq\left\{ i\in\mathbb{Z}\mid0\leq i\leq p-1\right\} $
and $K^{p}L\subset K$. Thus, $a\in K^{\times}$ satisfying $g=0$
is given as a non-trivial solution to the system $S$ of homogeneous
linear equations for $p^{n}$ unknowns $(a_{I})_{I\in[0,p-1]^{n}}$.
The rank of the coefficient matrix of $S$ then must be at most $p^{n}-1$
since $g_{I}=0$ for $I$ whose entries are all $p-1$, implying the
existence of a non-trivial solution for $S$. Hence, the desired $a$
exists.
\end{proof}
\begin{thm}
\label{thm:p-closed}Let $K$ be a field of characteristic $p>0$.
Pick two elements $f,g$ of $K$ and a subfield $L$. Suppose that
$K$ has a $p$-basis $x,y$ over $L$. Let $a\in K^{\times}$ be
an element such that $\partial_{x}(af)+\partial_{y}(ag)=0$ (whose
existence is guaranteed by Lemma \ref{lem:p-closed}). Then, for $\partial\coloneqq f\partial_{x}+g\partial_{y}$,
it holds that 
\[
a\left(\partial(x)\partial^{p}(y)-\partial(y)\partial^{p}(x)\right)=f^{p}\partial_{x}^{p-1}(ag)-g^{p}\partial_{y}^{p-1}(af).
\]
\end{thm}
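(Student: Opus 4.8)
The plan is to express both sides of the identity as interior products of derivations with the closed $1$-form
\[
\eta\coloneqq ag\,dx-af\,dy\in\Omega^{1}_{K/L},
\]
and to play the two descriptions of the Cartier operator against each other. The hypothesis $\partial_{x}(af)+\partial_{y}(ag)=0$ is exactly the statement $d\eta=0$, and the one‑line check $\langle\partial,\eta\rangle=f(ag)-g(af)=0$ shows that $\eta$ is killed by contraction with $\partial$. Since $\partial(x)=f$ and $\partial(y)=g$, the left‑hand side of the theorem equals
\[
a\bigl(\partial(x)\partial^{p}(y)-\partial(y)\partial^{p}(x)\bigr)=-\langle\partial^{p},\eta\rangle .
\]

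Next I would recall, following \cite{key-1}, the Cartier operator $\mathcal{C}$ defined on \emph{all} of $\Omega^{1}_{K/L}$: it is additive and $p^{-1}$-linear, annihilates exact forms, satisfies $\mathcal{C}(t^{p-1}\,dt)=dt$ for every $t\in K$, and on closed forms it agrees with the classical Cartier operator. Writing $\mathcal{C}(\eta)=\mathcal{C}(ag\,dx)+\mathcal{C}(-af\,dy)$ by additivity, evaluating each summand by the explicit formula for $\mathcal{C}$ on arbitrary $1$-forms, and using $d\eta=0$ to discard the terms mixing the two $p$-basis variables, one obtains $\mathcal{C}(\eta)=w_{1}\,dx+w_{2}\,dy$ with $w_{1}^{p}=-\partial_{x}^{p-1}(ag)$ and $w_{2}^{p}=\partial_{y}^{p-1}(af)$ (the $p$-th roots being taken in the relevant Frobenius twist when $L$ is imperfect). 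Since Frobenius is additive,
\[
\langle\partial,\mathcal{C}(\eta)\rangle^{p}=(fw_{1}+gw_{2})^{p}=f^{p}w_{1}^{p}+g^{p}w_{2}^{p}=-\bigl(f^{p}\partial_{x}^{p-1}(ag)-g^{p}\partial_{y}^{p-1}(af)\bigr),
\]
which is the negative of the right‑hand side of the theorem. Comparing with the previous display, the theorem is equivalent to the single identity $\langle\partial^{p},\eta\rangle=\langle\partial,\mathcal{C}(\eta)\rangle^{p}$.

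To establish this, I would write $\eta=\gamma+d\beta$, where $\gamma\coloneqq w_{1}^{p}x^{p-1}\,dx+w_{2}^{p}y^{p-1}\,dy$ is the canonical Frobenius representative: it is closed and $\mathcal{C}(\gamma)=w_{1}\,dx+w_{2}\,dy=\mathcal{C}(\eta)$, so $\eta-\gamma$ is a closed form with vanishing Cartier operator, hence exact. From $\langle\partial,\eta\rangle=0$ we get $\partial(\beta)=-\langle\partial,\gamma\rangle=-w_{1}^{p}x^{p-1}f-w_{2}^{p}y^{p-1}g$, and since $w_{1}^{p},w_{2}^{p}$ are $\partial$-constants, applying $\partial^{p-1}$ yields $\partial^{p}(\beta)=-w_{1}^{p}\partial^{p-1}(x^{p-1}f)-w_{2}^{p}\partial^{p-1}(y^{p-1}g)$. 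Adding $\langle\partial^{p},\gamma\rangle=w_{1}^{p}x^{p-1}\partial^{p}(x)+w_{2}^{p}y^{p-1}\partial^{p}(y)$ and using $\partial^{p}(x)=\partial^{p-1}(f)$, $\partial^{p}(y)=\partial^{p-1}(g)$,
\[
\langle\partial^{p},\eta\rangle=w_{1}^{p}\bigl(x^{p-1}\partial^{p-1}(f)-\partial^{p-1}(x^{p-1}f)\bigr)+w_{2}^{p}\bigl(y^{p-1}\partial^{p-1}(g)-\partial^{p-1}(y^{p-1}g)\bigr).
\]
It remains to invoke, with $t=x$ and $t=y$, the elementary identity
\[
t^{p-1}\partial^{p-1}(\partial t)-\partial^{p-1}\bigl(t^{p-1}\,\partial t\bigr)=(\partial t)^{p},
\]
valid for any derivation $\partial$ and any $t$; this rewrites the right‑hand side above as $w_{1}^{p}f^{p}+w_{2}^{p}g^{p}=(fw_{1}+gw_{2})^{p}=\langle\partial,\mathcal{C}(\eta)\rangle^{p}$, finishing the proof.

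I expect the crux to be the elementary identity, which I would prove by a direct computation: expand $\partial^{p-1}(t^{p-1}\,\partial t)$ by the Leibniz rule for iterated derivations and check, using $\binom{p-1}{k}\equiv(-1)^{k}$, $\partial(t^{p})=0$, and $(p-1)!\equiv-1\pmod p$, that the surviving sum collapses to $(\partial t)^{p}$. The other point requiring care is the passage from the formula for $\mathcal{C}$ on arbitrary $1$-forms to its simple shape on closed forms—equivalently, the agreement of the two definitions of $\mathcal{C}$—which is exactly what \cite{key-1} supplies, together with the Frobenius‑twist bookkeeping needed when $L$ is not perfect.
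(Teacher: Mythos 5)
Your proposal is correct, and its skeleton is the same as the paper's: both contract the closed $1$-form $\omega=ag\,dx-af\,dy$ (which is killed by $\partial$) against $\partial^{p}$, and both read off $C\omega$ from the classical closed-form recipe, so that everything reduces to the identity $\omega(\partial^{p})=\left(C\omega(\partial)\right)^{p}$. Where you differ is in how that identity is justified. The paper obtains it in one line from Cartier's definition of $C$ on \emph{all} $1$-forms, namely $\langle C\omega,D\rangle^{p}=\langle\omega,D^{p}\rangle-D^{p-1}\langle\omega,D\rangle$, the correction term vanishing because $\omega(\partial)=0$; the only remaining input is the cited agreement of this definition with the closed-form formula \cite{key-1}. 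You instead use only the classical operator on closed forms and re-derive the identity by splitting $\eta=\gamma+d\beta$ with $\gamma=w_{1}^{p}x^{p-1}dx+w_{2}^{p}y^{p-1}dy$, which reduces everything to the single-variable identity $t^{p-1}\partial^{p}(t)-\partial^{p-1}\left(t^{p-1}\,\partial t\right)=(\partial t)^{p}$. This buys self-containedness, but that identity is exactly the computational content of the statement $C(t^{p-1}dt)=dt$ under Cartier's general definition, so the two proofs rest on the same underlying fact. Two cautions: the promised ``direct Leibniz-rule computation'' of the identity is less routine than you suggest for a general derivation, since $\partial^{k}(t^{p-1})$ involves higher $\partial$-derivatives of $t$ and not merely powers of $\partial t$ (a universal-coefficients or Hochschild-type argument, or the citation to \cite{key-1}, is the clean route); and the exactness of $\eta-\gamma$ is most economically checked by hand from the coordinate expansion (closedness forces the relevant $x^{p-1}$- and $y^{p-1}$-coefficients into $K^{p}L$ and the remainder integrates explicitly, as in Remark \ref{rem:p-closed} and Example \ref{exa:p-closed}) rather than by invoking the full Cartier exact sequence.
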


\begin{rem}
\label{rem:p-closed}Unlike the left-hand side, the right-hand side
of Theorem \ref{thm:p-closed} can be calculated easily. For $b=\sum_{(i,j)\in[0,p-1]^{2}}b_{i,j}x^{i}y^{j}$
($b_{i,j}\in K^{p}L$), we have $\partial_{x}^{p-1}(b)=-\sum_{j=0}^{p-1}b_{p-1,j}y^{j}$
and $\partial_{y}^{p-1}(b)=-\sum_{i=0}^{p-1}b_{i,p-1}x^{i}$ since
$(p-1)!\equiv-1\bmod{p}$. Hence, $\partial_{x}^{p-1}(ag)$ (resp.\ $\partial_{y}^{p-1}(af)$)
equals the coefficient of $-ag$ (resp.\ $-af$) at $x^{p-1}$ (resp.\ $y^{p-1}$)
since $\partial_{x}(af)+\partial_{y}(ag)=0$.
\end{rem}

\begin{proof}
Let $\omega\coloneqq ag\,dx-af\,dy$. From $\omega(\partial)=0$ and
the definition of the Cartier operator (see \cite[Ch.\,2, \S6, p.\,200]{key-1}),
it follows that $a\left(\partial(x)\partial^{p}(y)-\partial(y)\partial^{p}(x)\right)=-\omega(\partial^{p})=-(C\omega(\partial))^{p}$.
Since $d\omega=0$, we find $C\omega=-\partial_{x}^{p-1}(ag)^{1/p}dx+\partial_{y}^{p-1}(af)^{1/p}dy$
by the formula in the last paragraph of \cite[the proof of Proposition 8, p.\,202]{key-1}
and Remark \ref{rem:p-closed}. Hence, $-\left(C\omega(\partial)\right)^{p}=f^{p}\partial_{x}^{p-1}(ag)-g^{p}\partial_{y}^{p-1}(af).$
This completes the proof.
\end{proof}
\begin{example}
\label{exa:p-closed}Suppose that $\partial_{x}(f)+\partial_{y}(g)=0$.
We let $c_{f}\coloneqq-\partial_{y}^{p-1}(f)$ and $c_{g}\coloneqq-\partial_{x}^{p-1}(g)$.
By definition, $c_{f},c_{g}\in K^{p}L$, and there exists $h\in K$
such that $f=\partial_{y}(h)+c_{f}y^{p-1}$ and $g=-\partial_{x}(h)+c_{g}x^{p-1}$.
Theorem \ref{thm:p-closed} implies
\[
\partial\text{ is }p\text{-closed }\Leftrightarrow\text{ }f^{p}c_{g}=g^{p}c_{f}\text{ }\Leftrightarrow\text{ }\exists c\in K^{p}L,\text{ }(c_{f},c_{g})=(cf^{p},cg^{p}).\tag{{\ensuremath{*}}}
\]
Thus, in particular, if either $f=0$ or $g=0$, then $\partial$
is $p$-closed. We note the following two special cases:
\begin{enumerate}
\item Suppose that $K$ contains the polynomial ring $k[x,y]$ in two indeterminates
$x,y$ over a subfield $k$ of $K^{p}L$ and that $f$ and $g$ are
nonzero coprime elements of $k[x,y]$. In that case, the first of
the equivalences ($*$) simplifies to
\[
\partial\text{ is }p\text{-closed }\Leftrightarrow\text{ }c_{f}=c_{g}=0.
\]
Notice here that the condition of $f$ and $g$ being coprime is crucial.
For example, for $f=g=(x-y)^{p-1}$, $\partial$ is obviously $p$-closed
although $c_{f}=c_{g}=1$.
\item Despite (1), there are plenty of nontrivial examples of $\partial$
for which both $c_{f}$ and $c_{g}$ are nonzero. Suppose that $K$
contains the formal power series ring $k[\![x,y]\!]$ in two indeterminates
$x,y$ over a subfield $k$ of $K^{p}L$ and let $f,g\in k[\![x,y]\!]$.
Then, the equivalences ($*$) and straightforward calculations show
that the following conditions are equivalent:
\begin{enumerate}
\item $\partial$ is $p$-closed, and both $c_{f}\in f^{p}k[\![x,y]\!]$
and $c_{g}\in g^{p}k[\![x,y]\!]$ hold;
\item $f$ and $g$ take the forms
\begin{align*}
f & =\sum_{i\geq0}\partial_{y}(h)^{p^{i}}c^{\frac{p^{i}-1}{p-1}}y^{p^{i}-1}\\
g & =-\sum_{i\geq0}\partial_{x}(h)^{p^{i}}c^{\frac{p^{i}-1}{p-1}}x^{p^{i}-1}
\end{align*}
with some $h\in k[\![x,y]\!]$ and some $c\in k[\![x^{p},y^{p}]\!]$,
where the assumptions $\partial_{x}(f)+\partial_{y}(g)=0$ and $f,g\in k[\![x,y]\!]$
automatically hold.
\end{enumerate}
\end{enumerate}
\end{example}

\begin{cor}
\label{cor:monomial}Take two elements\textup{ $m_{x},m_{y}$ of the
localization $\Z_{(p)}$ of $\mathbb{Z}$ at the prime ideal $(p)$}.
Set $(n_{x},n_{y})\coloneqq(m_{x}+1,m_{y}+1)$. Then, we have
\[
y^{m_{y}}\partial_{x}+x^{m_{x}}\partial_{y}\text{ is }p\text{-closed }\Leftrightarrow\text{ }n_{x},n_{y}\in\Z_{(p)}^{\times}\text{ or }n_{x}=n_{y}=0.
\]
\end{cor}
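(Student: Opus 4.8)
The plan is to reduce the statement directly to Example~\ref{exa:p-closed}. Here $K$ is understood to be a field of characteristic $p$ having a $p$-basis $x,y$ over a subfield $L$ (as in Theorem~\ref{thm:p-closed}) and containing the powers $x^{m},y^{m}$ for all $m\in\Z_{(p)}$; a concrete model is recorded at the end. I would set $f\coloneqq y^{m_{y}}$ and $g\coloneqq x^{m_{x}}$, so that the derivation under consideration is exactly $\partial=f\partial_{x}+g\partial_{y}$. Since $x,y$ form a $p$-basis, $\partial_{x}(y)=\partial_{y}(x)=0$, hence $\partial_{x}(f)=\partial_{y}(g)=0$, and in particular $\partial_{x}(f)+\partial_{y}(g)=0$, so the hypothesis of Example~\ref{exa:p-closed} is met. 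It then remains to make the constants $c_{f}=-\partial_{y}^{p-1}(f)$ and $c_{g}=-\partial_{x}^{p-1}(g)$ explicit and to apply the first equivalence there: $\partial$ is $p$-closed if and only if $f^{p}c_{g}=g^{p}c_{f}$.

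The computational heart is a falling-factorial identity. For $m\in\Z_{(p)}$ one has $\partial_{y}^{p-1}(y^{m})=m(m-1)\cdots(m-p+2)\,y^{m-p+1}$, and I would observe that the coefficient $m(m-1)\cdots(m-p+2)\in\Z_{(p)}$, read in $\mathbb{F}_{p}\subseteq K$, vanishes precisely when one of the $p-1$ consecutive factors $m,m-1,\ldots,m-p+2$ lies in $p\Z_{(p)}$, that is, precisely when $m\not\equiv-1\pmod{p}$, i.e.\ when $m+1\in\Z_{(p)}^{\times}$; whereas when $m+1\in p\Z_{(p)}$ the factors exhaust $\mathbb{F}_{p}^{\times}$, so the coefficient equals $(p-1)!\equiv-1$. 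Applying this to $f=y^{m_{y}}$ and, symmetrically, to $g=x^{m_{x}}$, I get $c_{f}=0$ when $n_{y}\in\Z_{(p)}^{\times}$ and $c_{f}=y^{m_{y}-p+1}=y^{n_{y}-p}$ when $n_{y}\in p\Z_{(p)}$, and likewise $c_{g}=0$ or $c_{g}=x^{n_{x}-p}$ according to whether $n_{x}$ is a unit of $\Z_{(p)}$ or not (in the non-unit case $n_{y}-p\in p\Z_{(p)}$, so $y^{n_{y}-p}\in K^{p}$, consistently with Example~\ref{exa:p-closed}).

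With these formulas I would split into three cases by the number of units among $n_{x},n_{y}$. If both are units, then $c_{f}=c_{g}=0$, so $f^{p}c_{g}=0=g^{p}c_{f}$ and $\partial$ is $p$-closed. If exactly one is a unit, say $n_{x}\in\Z_{(p)}^{\times}$ and $n_{y}\in p\Z_{(p)}$, then $c_{g}=0$ while $c_{f}=y^{n_{y}-p}\neq0$, so $g^{p}c_{f}=x^{pm_{x}}y^{n_{y}-p}\neq0=f^{p}c_{g}$ and $\partial$ is not $p$-closed, consistently with the right-hand side of the claim, which in this case also fails. If neither is a unit, then $c_{f}=y^{n_{y}-p}$ and $c_{g}=x^{n_{x}-p}$ are both nonzero, and the criterion $f^{p}c_{g}=g^{p}c_{f}$ reads $x^{n_{x}-p}y^{p(n_{y}-1)}=x^{p(n_{x}-1)}y^{n_{y}-p}$; comparing exponents of $x$ and of $y$ separately gives $(p-1)n_{x}=(p-1)n_{y}=0$ in $\Z_{(p)}$, hence $n_{x}=n_{y}=0$, and conversely $n_{x}=n_{y}=0$ makes the identity hold. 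Together these three cases give the asserted equivalence.

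The step I expect to be the only genuine obstacle (the rest being the falling-factorial bookkeeping above) is justifying the exponent comparison in the last case: one must know that $x^{a}y^{b}=x^{c}y^{d}$ with $a,b,c,d\in\Z_{(p)}$ forces $a=c$ and $b=d$, i.e.\ that the monomials $\{x^{a}y^{b}\}_{(a,b)\in\Z_{(p)}^{2}}$ are $\mathbb{F}_{p}$-linearly independent in $K$. This follows from $x$ and $y$ being part of a $p$-basis, hence algebraically independent over the prime field. Concretely, one may take $K$ to be the fraction field of the monomial ring $\bigoplus_{(a,b)\in\Z_{(p)}^{2}}\mathbb{F}_{p}\,x^{a}y^{b}$ (a domain, since $\Z_{(p)}\oplus\Z_{(p)}$ is torsion-free), in which $x,y$ is a $p$-basis over $K^{p}$ and the required independence is manifest.
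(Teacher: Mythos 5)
Your proposal is correct and follows essentially the same route as the paper: both reduce to the criterion of Theorem~\ref{thm:p-closed} with $a=1$ (you via its restatement in Example~\ref{exa:p-closed}), compute $\partial_x^{p-1}(x^{m_x})$ and $\partial_y^{p-1}(y^{m_y})$ as $0$ or $-1$ times a monomial according to whether $n_x,n_y$ are units in $\Z_{(p)}$, and conclude by the same case analysis and exponent comparison. Your falling-factorial/Wilson computation is just an alternative to the paper's use of Remark~\ref{rem:p-closed}, and your explicit justification of the monomial independence (which the paper leaves implicit) is a welcome extra.
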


\begin{proof}
First, notice that $(a,f,g)\coloneqq(1,y^{m_{y}},x^{m_{x}})$ clearly
satisfies the assumption $\partial_{x}(af)+\partial_{y}(ag)=0$ of
Theorem \ref{thm:p-closed}. Then, Remark \ref{rem:p-closed} gives
\[
f^{p}\partial_{x}^{p-1}(g)-g^{p}\partial_{y}^{p-1}(f)=(xy)^{-p}\left(\varepsilon_{x}x^{n_{x}}y^{pn_{y}}-\varepsilon_{y}x^{pn_{x}}y^{n_{y}}\right),
\]
where
\[
\varepsilon_{z}\coloneqq\begin{cases}
0 & n_{z}\not\equiv0\bmod p\\
-1 & n_{z}\equiv0\bmod p.
\end{cases}
\]
Thus, Theorem \ref{thm:p-closed} implies
\[
y^{m_{y}}\partial_{x}+x^{m_{x}}\partial_{y}\text{ is }p\text{-closed }\Leftrightarrow\text{ }\varepsilon_{x}x^{n_{x}}y^{pn_{y}}=\varepsilon_{y}x^{pn_{x}}y^{n_{y}}.
\]
The latter condition can be divided into the two cases
\begin{enumerate}
\item $\varepsilon_{x}=\varepsilon_{y}=0$;
\item $\varepsilon_{x}=\varepsilon_{y}=-1$ and $(n_{x},pn_{y})=(pn_{x},n_{y})$,
\end{enumerate}
which are equivalent to 
\begin{enumerate}
\item[(1')] $n_{x},n_{y}\in\Z_{(p)}^{\times}$;
\item[(2')] $n_{x}=n_{y}=0$,
\end{enumerate}
respectively. This gives the claim.
\end{proof}
\begin{example}
Let $k$ be an algebraically closed field of characteristic $p=5$.
By (1) of Example \ref{exa:p-closed} (or more directly by Corollary
\ref{cor:monomial}), the derivation $\partial\coloneqq y\partial_{x}+x^{2}\partial_{y}$
is $p$-closed. The singularity of the resulting quotient $\Spec k[x,y]^{\partial}$
of the affine plane $\Spec k[x,y]$ is a non-taut rational double
point $E_{8}^{0}$ \cite[Part II, 3.1.3]{key-6}.
\end{example}

\subsection*{Acknowledgment}

This work was supported by JSPS KAKENHI Grant Number JP21K03179 and
MOST Grant Number 111-2115-M-002-003-MY3.

\end{document}